\tikzstyle{every node}=[circle, draw, inner sep=0pt, minimum width=4pt]
\newtheorem{theorem}{Theorem}
\newtheorem{lemma}{Lemma}[section]
\newtheorem{claim}{Claim}
\title{On the hardness of deciding the equality of the induced and the uniquely restricted matching number}
\author{M. Fürst}
\date{}
\begin{document}

 \maketitle
 \begin{center}
 {\small 
Institute of Optimization and Operations Research, Ulm University, Germany\\
\texttt{maximilian.fuerst@uni-ulm.de}\\[3mm]
}
\end{center}

\begin{abstract}
If $G(M)$ denotes the subgraph of a graph $G$ 
induced by the set of vertices that are covered by some matching $M$ in $G$, 
then $M$ is 
an induced 
or a uniquely restricted matching 
if $G(M)$ is $1$-regular 
or if $M$ is the unique perfect matching of $G(M)$, respectively.
Let $\nu_s(G)$ and $\nu_{ur}(G)$ denote
the maximum cardinality of 
an induced and 
a uniquely restricted matching in $G$.
Golumbic, Hirst, and Lewenstein 
(Uniquely restricted matchings, Algorithmica 31 (2001) 139-154)
posed the problem to characterize the graphs $G$ with $\nu_{ur}(G) = \nu_{s}(G)$.
We prove that the corresponding decision problem is NP-hard, which suggests
that a good characterization is unlikely to be possible.
\end{abstract}
 
 {\small 
\begin{tabular}{lp{13cm}}
{\bf Keywords: Induced matching; strong matching; uniquely restricted matching} 
\end{tabular}
}

\section{Introduction}
We consider only simple, finite, and undirected graphs, and use standard terminology.
For a graph $G$, and a matching $M$ in $G$, let $V(M)$ be the set of vertices that are
covered by $M$, and let $G(M)$ be the subgraph of $G$ induced by $V(M)$. 
A matching $M$ in $G$ is 
\begin{itemize}
 \item \textit{induced} \cite{ca1} if $G(M)$ is $1$-regular,
 \item \textit{acyclic} \cite{gohehela} if $G(M)$ is a forest, or
 \item \textit{uniquely restricted} \cite{gohile} if $M$ is the unique perfect matching of $G(M)$.
\end{itemize}
The maximum cardinality of an ordinary, a uniquely restricted, an acyclic, and an induced matching is denoted by
$\nu(G)$, $\nu_{ur}(G)$, $\nu_{ac}(G)$, and $\nu_s(G)$, respectively.
While the ordinary matching number is tractable \cite{edmonds},
the three remaining restricted matching numbers are NP-hard \cite{gohehela, gohile, stva}.
Golumbic et al. \cite{gohile} observed that a matching $M$ 
in $G$ is uniquely restricted if and only if $G$ contains no $M$-alternating cycle, which implies that
$\nu_{ur}(G) \geq \nu_{ac}(G)$. 
Since every induced matching is also acyclic,
we obtain that
\begin{align} \label{key_ineq}
 \nu(G) \geq \nu_{ur}(G) \geq \nu_{ac}(G) \geq \nu_s(G).
\end{align}
In order to understand how those different restricted matching numbers relate
to each other, it seems to be interesting to characterize the graphs
achieving equality in one or more of the inequalities in (\ref{key_ineq}). 
On the positive side, deciding whether a given graph $G$ satisfies $\nu(G) = \nu_s(G)$ or $\nu(G) = \nu_{ur}(G)$,
and deciding whether a given subcubic graph $G$ satisfies $\nu_{ur}(G) = \nu_s(G)$
is tractable
\cite{cawa, jora, koro,lema, peraso, fura2}. 
On the negative side, 
the hardness of deciding $\nu(G) = \nu_{ac}(G)$ and $\nu_{ur}(G) = \nu_{ac}(G)$ was shown in \cite{fura}.
In 2001, Golumbic et al. \cite{gohile} posed the problem to characterize the graphs $G$ with $\nu_{ur}(G) = \nu_{s}(G)$.
In this short note, we will prove the hardness of deciding 
$\nu_{ur}(G) = \nu_{ac}(G)$ and $\nu_{ur}(G) = \nu_s(G)$.
This shows that a good characterization does not exist unless NP $=$ co-NP.
Note that it is not obvious whether these two decision problems belong to NP.
\begin{theorem} \label{t1}
 Deciding whether a given graph $G$ of maximum degree $4$ satisfies $\nu_{ac}(G) = \nu_s(G)$ is NP-hard.
\end{theorem}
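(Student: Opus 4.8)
The plan is to reduce from a bounded-occurrence version of $3$-SAT, which remains NP-complete while keeping all vertex degrees under control. Given a formula $\phi$ with variables $x_1,\dots,x_n$ and clauses $C_1,\dots,C_m$, I would build a graph $G=G(\phi)$ of maximum degree $4$ together with a target value $A$ (computable from $n$ and $m$) such that $\nu_{ac}(G)=A$ holds unconditionally, whereas $\nu_s(G)=A$ holds if and only if $\phi$ is satisfiable. Since $\nu_{ac}(G)\ge \nu_s(G)$ by (\ref{key_ineq}), this gives $\nu_{ac}(G)=\nu_s(G)$ exactly when $\phi$ is satisfiable, so the equality problem inherits the NP-hardness of SAT. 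The guiding observation is that the only way an acyclic matching can beat an induced one is by joining two of its edges through a third edge, as already happens in $P_4=a\,b\,c\,d$, where $\{ab,cd\}$ is an acyclic matching of size $2$ but $\nu_s(P_4)=1$; the construction plants exactly such correctable "gaps" and makes their correction equivalent to satisfying the clauses.

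For the construction I would use two kinds of gadgets. Each variable $x_i$ gets a small \emph{variable gadget} admitting precisely two maximum induced matchings, corresponding to the truth values of $x_i$, and exposing one \emph{literal edge} per occurrence of $x_i$ or $\overline{x_i}$; the bounded-occurrence hypothesis keeps the number of exposed edges, and hence all degrees, at most $4$. Each clause $C_j$ gets a \emph{clause gadget} of $P_4$-type, attached to the literal edges of its literals and engineered so that its local maximum acyclic matching exceeds its local maximum induced matching by exactly one, with the missing induced edge becoming available precisely when at least one incident literal is set to true. Summing the local contributions yields a baseline acyclic matching of size $A$ that is always realizable in a cycle-free way, while an induced matching can reach $A$ only by closing every clause gap, that is, by exhibiting a satisfying assignment.

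The correctness argument splits into the two usual directions. If $\phi$ is satisfiable, I would fix a satisfying assignment, select the corresponding maximum induced matching in each variable gadget, and in each clause gadget use a true literal to pick up the extra induced edge, producing an induced matching of size $A$; together with $\nu_s(G)\le \nu_{ac}(G)=A$ this forces equality. Conversely, from any induced matching of size $A$ I would read off a truth assignment from the variable gadgets (whose tightness forces each into one of its two canonical states) and argue that attaining the full clause contribution forces every clause to contain a true literal, so $\phi$ is satisfiable; if $\phi$ is unsatisfiable, some clause gap cannot be closed and $\nu_s(G)\le A-1<A=\nu_{ac}(G)$.

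The main obstacle, as in the related reductions of \cite{fura}, is the exact upper bound $\nu_{ac}(G)\le A$: I must show that no acyclic matching can exceed the baseline by routing tree-like paths across several gadgets or by trading local losses for global gains, which requires a careful global argument, for instance a weighting or charging scheme that assigns each gadget its fair share and certifies that inter-gadget edges cannot increase the total while keeping $G(M)$ cycle-free. A secondary difficulty is ruling out "spurious" induced matchings of size $A$ that close the clause gaps without corresponding to a consistent assignment — this is exactly what forces the variable gadgets to be rigid — all while respecting the degree-$4$ constraint, which forbids bulky gadgets and makes the compact $P_4$-based design essentially mandatory.
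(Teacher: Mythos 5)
Your high-level strategy coincides with the paper's: reduce from a bounded-occurrence restriction of {\sc Satisfiability}, build a graph in which $\nu_{ac}(G)$ equals a fixed target $A$ unconditionally, and arrange that $\nu_s(G)=A$ if and only if the formula is satisfiable, so that equality in $\nu_{ac}(G)\geq\nu_s(G)$ encodes satisfiability. But as a proof the proposal has genuine gaps, and you flag them yourself as ``obstacles'' without resolving them. First, no gadget is ever exhibited: ``a small variable gadget admitting precisely two maximum induced matchings'' and ``a clause gadget of $P_4$-type whose local maximum acyclic matching exceeds its local maximum induced matching by exactly one'' are specifications, not constructions, and all of the difficulty of the theorem lies in producing gadgets for which the required lemmas are actually provable under the degree-$4$ constraint. (Your spec also differs from what turns out to work: in the paper's construction the clause gadget is a clique, whose local acyclic and induced matching numbers are both $1$; the deficit for induced matchings arises only through interaction with the variable gadgets, not locally.) Second, the exact bound $\nu_{ac}(G)\leq A$, which you defer to ``a weighting or charging scheme,'' is precisely the technical heart of the argument; without a concrete mechanism it is not merely unproven but false for careless gadget choices, since acyclic matchings can gain edges by crossing gadget boundaries.

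The paper closes both gaps with one design idea absent from your outline: every gadget carries a \emph{spare vertex} with no neighbors outside its gadget. Variables become triangles $X_i=\{u_i,f_i,t_i\}$ (spare $u_i$), clauses become cliques on $|c_j|+1$ vertices (spare $v_j$), with $f_i$ joined to the at most two occurrences of $x_i$ and $t_i$ to the single occurrence of $\bar{x}_i$; these occurrence bounds are what keep the maximum degree at $4$. The spare vertices power an exchange argument: writing a maximum acyclic matching as $M_1\cup M_2\cup M_3$ (edges inside variable gadgets, inside clause gadgets, and between gadgets) with $|M_3|$ minimized, any gadget incident to exactly one edge of $M_3$ allows that edge to be rerouted onto the spare vertex, contradicting minimality; hence each gadget meets $M_3$ in $0$ or $2$ edges, and a short count gives $\nu_{ac}(G)=n+m$ exactly. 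The same spare vertices supply the ``rigidity'' you assert but do not prove in the converse direction: choosing a maximum induced matching that covers as many $u_i$ as possible forces $M_3=\emptyset$ and forces each triangle to use $u_if_i$ or $u_it_i$, from which the truth assignment is read off and every clause clique is checked to contain a true literal. Without this mechanism, or an equally concrete substitute, your plan cannot be completed into a proof.
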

\begin{theorem} \label{t2}
Deciding whether a given bipartite graph $G$ satisfies $\nu_{ur}(G) = \nu_s(G)$ is NP-hard. 
\end{theorem}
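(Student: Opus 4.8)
The plan is to establish NP-hardness by a polynomial reduction from a known NP-complete problem, turning each instance into a bipartite graph $G$ for which $\nu_{ur}(G) = \nu_s(G)$ holds exactly for the yes-instances. A convenient source problem is the maximum induced (strong) matching problem, which is NP-complete already on bipartite graphs by \cite{stva}: given a bipartite graph $H$ and an integer $k$, decide whether $\nu_s(H) \geq k$. Since the chain (\ref{key_ineq}) gives $\nu_{ur}(G) \geq \nu_{ac}(G) \geq \nu_s(G)$, the equation $\nu_{ur}(G) = \nu_s(G)$ is equivalent to the simultaneous collapse $\nu_{ur}(G) = \nu_{ac}(G) = \nu_s(G)$, so it is enough to control only the two outer parameters; the entire difficulty lies in forcing $\nu_{ur}(G) \leq \nu_s(G)$ precisely when $H$ has a large induced matching.

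The guiding example is $P_4$: its two-edge matching is uniquely restricted, because a path is a tree and so has a unique perfect matching, whereas every induced matching of $P_4$ has size one. Thus a path gadget lets a uniquely restricted matching beat every induced matching by a controlled amount. First I would fix a target $t = k + c$ for a gadget constant $c$, and assemble $G$ from $H$ together with a family of such gadgets organised into ``slots,'' designing the construction so that a uniquely restricted matching can always occupy every slot (reaching size $t$ regardless of $H$), while an induced matching can occupy a slot only by genuinely using an induced-matching edge of $H$. With the gadgets tuned so that at most $k$ slots are ever available, this would yield $\nu_{ur}(G) = t$ in all cases and $\nu_s(G) = \min\{\nu_s(H),k\} + c$; hence $\nu_s(G) = t$ if and only if $\nu_s(H) \geq k$, and when $\nu_s(H) < k$ the induced matching falls strictly short of $t$ while the uniquely restricted matching still reaches it, opening a gap of at least one.

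The key steps, in order, are: (1) describe the gadget and check that $G$ is bipartite; (2) prove $\nu_{ur}(G) = t$, which splits into an easy lower bound and a delicate upper bound based on the characterisation of \cite{gohile} that a matching is uniquely restricted if and only if it admits no alternating cycle; (3) prove $\nu_s(G) = \min\{\nu_s(H),k\} + c$ by translating induced matchings back and forth between $G$ and $H$; and (4) combine these with (\ref{key_ineq}) to conclude that $\nu_{ur}(G) = \nu_s(G)$ exactly when $\nu_s(H) \geq k$.

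I expect the upper bound $\nu_{ur}(G) \leq t$ in step (2) to be the main obstacle. Because uniquely restricted matchings may have arbitrarily large tree components rather than single edges, there is far more freedom to exploit than for induced matchings, and one must show that any attempt to exceed $t$ forces an alternating cycle somewhere in $G$. The bipartiteness requirement compounds this: all cycles are even, so the gadgets are restricted to even paths and cycles, and the ``surplus'' that a uniquely restricted matching can extract from the $H$-part must be rigidly capped, independently of the possibly large value $\nu_{ur}(H)$. Getting this cap to hold while simultaneously keeping $\nu_s(G)$ on target is the crux of the construction. Finally, as remarked in the paper, it is unclear whether the problem lies in NP; only hardness is claimed, which is exactly what is needed for the conclusion that no good characterisation exists unless NP $=$ co-NP.
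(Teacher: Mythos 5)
Your proposal is a plan rather than a proof: the entire content of the theorem lies in the construction that you leave unspecified, and the architecture you commit to cannot deliver it. You want to assemble $G$ from $H$ together with gadgets, with $\nu_{ur}(G)$ capped at a fixed target $t = k + c$. But for induced matchings of $H$ to survive into $G$ (which your step (3) requires), the gadgets must attach to $H$ by edges only, so that $H$ remains an induced subgraph of $G$. Then for any matching $M$ of $H$ we have $G[V(M)] = H[V(M)]$, so every uniquely restricted matching of $H$ stays uniquely restricted in $G$, and hence $\nu_{ur}(G) \geq \nu_{ur}(H)$. Nothing bounds $\nu_{ur}(H)$ in terms of $k$: the Stockmeyer--Vazirani instances are sparse bipartite graphs of maximum degree $4$, for which the uniquely restricted matching number is typically of order $|V(H)|$, vastly exceeding $k + c$; moreover $\nu_{ur}(H)$ is itself NP-hard to compute, so you cannot absorb it into the target $t$ either. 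The only way out is to add edges inside $V(H)$, but that destroys exactly the induced matchings you need to preserve. This tension --- which you yourself flag as ``the crux'' --- is not a technical hurdle left to be worked out; it is a structural obstruction to any reduction of the shape you describe, and resolving it (transforming $H$ so that induced matchings are preserved while uniquely restricted matchings are capped) is essentially equivalent to the theorem itself.

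The paper avoids the obstruction by never embedding an arbitrary graph: it reduces from a restricted version of {\sc Exact Satisfiability} (shown NP-complete in Lemma \ref{l_ur_red}, building on \cite{possw}), and constructs $G$ entirely out of bounded-size gadgets --- a $K_{1,2}$ per variable and a $K_{1,3}$ per clause, wired together according to literal occurrences. Because every piece is tiny and the wiring is controlled, the cap $\nu_{ur}(G) = n + m$ can actually be proved (Lemma \ref{l_ur}, whose key tool is the fact from \cite{gohile} that for a uniquely restricted matching $M$ in a bipartite graph, $G(M)$ has a vertex of degree one), and exact satisfiability is then shown equivalent to $\nu_s(G) = n + m$. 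It is worth noting that the overall shape of your plan --- fix $\nu_{ur}(G)$ at a known value and arrange that $\nu_s(G)$ reaches it precisely for yes-instances --- is exactly the paper's; what is missing is the realization that the source problem must be a constraint-satisfaction problem encoded by small gadgets, not an induced-matching instance carried along as an induced subgraph.
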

The proofs of Theorem \ref{t1} and \ref{t2} are postponed to the following sections.
We close the introduction with a few notations. 
For a graph $G$ and
two disjoint sets $X$ and $Y$ of vertices of $G$, let 
$$E_G(X, Y) = \{uv \in E(G) : u \in X, v \in Y\},$$
and let $E_G(X) = E(G[X])$.
For every positive integer $k$, let $[k] = \{1,\ldots,k\}$.
\section{Proof of Theorem \ref{t1}}
 We prove the statement by a reduction from 
 {\sc Satisfiability} that remains NP-complete (cf. e.g. \cite{gajo}) for instances
 where
 every clause contains two or three literals,
 every positive literal appears in at most two different clauses,
 every negative literal appears in at most one clause,
 and no clause contains a literal and its negation.
 Let $\Gamma$ be such an instance of {\sc Satisfiability}
 with variables $x_1,\ldots,x_n$ and
 clauses $c_1,\ldots,c_m$. 
 For every $j \in [m]$, 
 let $|c_j|$ be the number of literals that belong to the clause $c_j$.
 
 Let $G$ be a graph that arises from the union of 
 $n$ disjoint triangles with vertex sets $X_1,\ldots,X_n$,
 and $m$ disjoint cliques with vertex sets $C_1,\ldots,C_m$, 
 where $|C_j| = |c_j| + 1$ for every $j \in \lbrack m \rbrack$.
 For every $i \in \lbrack n \rbrack$,
 let $X_i = \{t_i,f_i,u_i\}$.
 For every $j \in \lbrack m \rbrack$, 
 identify $|c_j|$ vertices in $C_j$ with the literals in the
 clause $c_j$, and let $v_j$ be the vertex in $C_j$ that is not identified with a literal from the clause $c_j$.
 Let $i$ be in $[n]$,
 let $w_1$ and $w_2$ be the vertices in $\bigcup_{j=1}^m C_j$
 that are identified with the literal $x_i$,
 and let $w_3$ be the vertex in $\bigcup_{j=1}^m C_j$
 that is identified with the literal $\bar{x}_i$.
 Now, add the edges $f_iw_1$, $f_iw_2$, and $t_iw_3$, see Figure \ref{fig1}
 for an illustration.
 \begin{figure}[H]
\centering\tiny
\begin{tikzpicture}[scale = 0.9] 
	    \foreach \i in {1,2,3} {
	    \node (u\i) at (5*\i,3) {};
	    \node (f\i) at (-0.5+5*\i,2) {};
	    \node (t\i) at (0.5+5*\i,2) {};
	    \draw[-] (u\i) -- (t\i) -- (f\i) -- (u\i);
	    }
	    \foreach \i in {1,3} {
	    \draw[-,dotted] (f\i) -- (4.25 + 5*\i -5,1.5);
	    \draw[-,dotted] (f\i) -- (4.75 + 5*\i -5,1.5);
	    \draw[-,dotted] (t\i) -- (5.75 + 5*\i -5,1.5);
	    }
	    \pgftext[x=5cm,y=3.25cm]  {\footnotesize $u_1$};
	    \pgftext[x=10cm,y=3.25cm] {\footnotesize $u_{i}$};
	    \pgftext[x=15cm,y=3.25cm] {\footnotesize $u_{n}$};
	    \pgftext[x=3.5cm,y=-0.25cm] {\footnotesize $v_{1}$};
	    \pgftext[x=8.5cm,y=-0.25cm] {\footnotesize $v_{j}$};
	    \pgftext[x=10.5cm,y=-0.25cm] {\footnotesize $v_{k}$};
	    \pgftext[x=12.5cm,y=-0.25cm] {\footnotesize $v_{l}$};
	    \pgftext[x=17.5cm,y=-0.25cm] {\footnotesize $v_{m}$};
	    \pgftext[x=9.2cm,y=2cm] {\footnotesize $f_{i}$};
	    \pgftext[x=10.8cm,y=2cm] {\footnotesize $t_{i}$};
	    \pgftext[x=7.7cm,y=0.75cm] {\footnotesize $w_{1}$};
	    \pgftext[x=9.7cm,y=0.75cm] {\footnotesize $w_{2}$};
	    \pgftext[x=11.7cm,y=0.75cm] {\footnotesize $w_{3}$};
	    \draw[-,dotted] (7,2.5) -- (8,2.5);
	    \draw[-,dotted] (12,2.5) -- (13,2.5);
	    
	    \foreach \i in {1,2,3} {
	    \node (a\i) at (6+2*\i,0.75) {};
	    \node (b\i) at (6.5+2*\i,0.75) {};
	    \node (c\i) at (7+2*\i,0.75) {};
	    \node (d\i) at (6.5+2*\i,0) {};
	    \draw[-] (a\i) -- (b\i) -- (c\i) -- (d\i) -- (a\i);
	    \draw[-] (d\i) -- (b\i);
	    \draw[-] (a\i) to[out=30,in=150] (c\i);
	    }	    
	    \draw[-] (f2) -- (a1);
	    \draw[-] (f2) -- (a2);
	    \draw[-] (t2) -- (a3);
	    \draw[-,dotted] (14.5,0.375) -- (15.5,0.375);
	    \draw[-,dotted] (6.5,0.375) -- (5.5,0.375);
	    
	    \foreach \i in {4,5} {
	    \node (a\i) at (-53+14*\i,0.75) {};
	    \node (b\i) at (-52.5 +14*\i,0.75) {};
	    \node (c\i) at (-52 +14*\i,0.75) {};
	    \node (d\i) at (-52.5 +14*\i,0) {};
	    \draw[-] (a\i) -- (b\i) -- (c\i) -- (d\i) -- (a\i);
	    \draw[-] (d\i) -- (b\i);
	    \draw[-] (a\i) to[out=30,in=150] (c\i);
	    }
\end{tikzpicture}
\caption{The construction for the proof of Theorem \ref{t1}.} \label{fig1}
\end{figure}
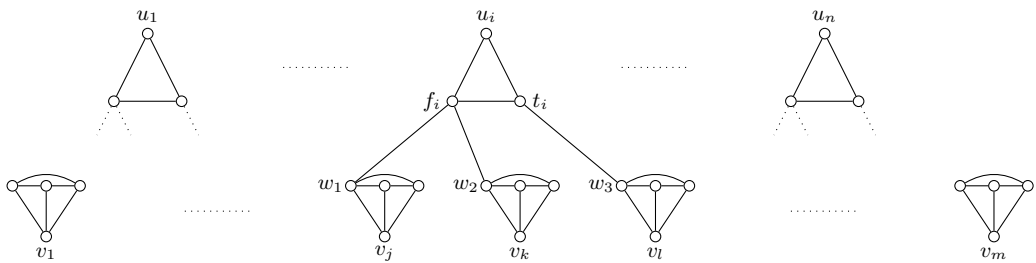
 
 \begin{lemma} \label{l_acy}
  $\nu_{ac}(G) = n+m$.
 \end{lemma}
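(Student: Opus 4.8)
The plan is to prove the two inequalities $\nu_{ac}(G) \ge n+m$ and $\nu_{ac}(G) \le n+m$ separately, with the second carrying essentially all the content.

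For the lower bound I would simply exhibit an explicit acyclic matching of size $n+m$. Take
$M_0 = \{ u_i f_i : i \in [n] \} \cup \{ v_j z_j : j \in [m] \}$,
where $z_j$ is an arbitrarily chosen literal vertex of the clique $C_j$ (which exists since $|c_j| \ge 2$). Since the triangles and cliques are vertex-disjoint, $|M_0| = n+m$, so it only remains to check that $G(M_0)$ is a forest. First I would note that $V(M_0)$ meets each triangle $X_i$ only in $\{u_i, f_i\}$ and each clique $C_j$ only in $\{v_j, z_j\}$, so the only edges of $G(M_0)$ lying inside a block are the matching edges themselves, and both $u_i$ and $v_j$ are leaves of $G(M_0)$. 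The only remaining edges of $G(M_0)$ are the connecting edges of the form $f_i z_j$, arising exactly when $z_j$ is a vertex identified with a positive occurrence of $x_i$; no connecting edge through $t_i$ survives, since $t_i$ is uncovered. As each literal vertex is a positive occurrence of at most one variable, every such $z_j$ sends exactly one edge to the $f_i$'s, so after deleting the leaves $u_i$ and $v_j$ the surviving graph is a disjoint union of stars centred at the $f_i$. Hence $G(M_0)$ is a forest and $\nu_{ac}(G) \ge n+m$.

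For the upper bound the single key observation is that an acyclic matching covers at most two vertices of any block. Indeed, let $M$ be an arbitrary acyclic matching and let $B$ be one of $X_1, \ldots, X_n, C_1, \ldots, C_m$; each such $B$ induces a clique in $G$ (a triangle when $B = X_i$). If $B$ contained three vertices of $V(M)$, these would be pairwise adjacent and would therefore induce a triangle in $G(M) = G[V(M)]$, contradicting that $G(M)$ is a forest. Thus $|V(M) \cap B| \le 2$ for every block $B$. Since the blocks partition $V(G)$, summing over all $n+m$ of them gives $|V(M)| \le 2(n+m)$, whence $|M| = \tfrac12 |V(M)| \le n+m$, and combining the two bounds yields $\nu_{ac}(G) = n+m$.

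I expect the main (and essentially only) obstacle to be spotting the clean ``at most two covered vertices per block'' argument driving the upper bound; once it is in hand the connecting edges become irrelevant and the rest is bookkeeping. The only care needed elsewhere is confined to the lower bound, namely verifying that the surviving connecting edges $f_i z_j$ cannot close a cycle, which follows from each literal vertex being attached to a unique variable together with $u_i$ and $v_j$ being leaves.
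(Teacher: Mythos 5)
Your proof is correct, but your upper bound goes by a genuinely different and noticeably simpler route than the paper's. For the lower bound you do essentially what the paper does (an explicit matching of size $n+m$: the paper takes $\{u_it_i : i \in [n]\}$ plus an arbitrary edge of each $E_G(C_j)$, you take $\{u_if_i : i \in [n]\}$ plus pendant edges $v_jz_j$; both verifications that no cycle survives are short and sound). For the upper bound, however, the paper decomposes a maximum acyclic matching as $M_1 \cup M_2 \cup M_3$ according to edges inside triangles, inside cliques, and crossing edges, takes such a matching minimizing $|M_3|$, and runs an exchange argument (replacing a lone crossing edge $uv$ by $v_ju$ or $u_iu$) to show each block meets $M_3$ in $0$ or $2$ edges, then counts. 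Your argument replaces all of this with one observation: since every block $X_i$, $C_j$ is a clique in $G$ and $G(M)$ is an induced forest on $V(M)$, any three covered vertices in a block would induce a triangle in $G(M)$; hence each of the $n+m$ blocks contains at most two covered vertices, giving $2|M| = |V(M)| \le 2(n+m)$ directly. This is correct and strictly more elementary -- no extremal choice, no exchange, no case analysis. What the paper's heavier machinery buys is transferability: in the proof of Lemma \ref{l_ur} (Theorem \ref{t2}) the blocks are stars $K_{1,2}$ and $K_{1,3}$ rather than cliques, so covered vertices there do not force cycles and your counting argument is unavailable; the decomposition-plus-exchange template is the one that carries over to that setting, which is presumably why the paper uses it in both places. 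Within the scope of Lemma \ref{l_acy} itself, your argument is the better one.
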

\begin{proof}
 Let $M$ arise from $\{u_i t_i : i \in \lbrack n \rbrack \}$ by 
 adding, for every $j \in \lbrack m \rbrack$,
 an arbitrary edge of $E_G(C_j)$.
 Suppose, for a contradiction, that $M$ is not an acyclic matching in $G$.
 Since $d_{G(M)}(u_i) = 1$ and $d_{G(M)}(t_i) \leq 2$,
 neither $u_i$ nor $t_i$ are contained in any cycle of $G(M)$ 
 for every $i \in \lbrack n \rbrack$.
 By construction, this implies a contradiction.
 Hence, $\nu_{ac}(G) \geq n+m$.
 
 Let $M = M_1 \cup M_2 \cup M_3$ be some maximum acyclic matching with
 \begin{itemize}
  \item $M_1 \subseteq \bigcup_{i=1}^n E_G(X_i)$,
  \item $M_2 \subseteq \bigcup_{j=1}^m E_G(C_j)$, and
  \item $M_3 \subseteq E_G \big( \bigcup_{i=1}^n{X_i}, V(G) \setminus \bigcup_{i=1}^n{X_i} \big)$
 \end{itemize}
 minimizing $|M_3|$.
 Let $j \in \lbrack m \rbrack$.
 If $E_G(C_j, V(G) \setminus C_j) \cap M_3 \neq \emptyset$, then $E_G(C_j) \cap M_2 = \emptyset$. 
 Since $M$ is acyclic, we have that $|E_G(C_j, V(G) \setminus C_j) \cap M_3| \leq 2$, and,
 if $E_G(C_j, V(G) \setminus C_j)\cap M_3$ contains exactly one edge $uv$ with $u \in C_j$, then 
 the matching $(M \cup \{v_ju\}) \setminus \{uv\}$ is acyclic, 
 which is a contradiction to the minimality of $|M_3|$.
 Therefore, $|E_G(C_j, V(G) \setminus C_j) \cap M_3| \in \{0,2\}$. 
 Let $i \in \lbrack n \rbrack$.
 If $E_G(X_i, V(G)\setminus X_i) \cap M_3 \neq \emptyset$, then $E_G(X_i) \cap M_1 = \emptyset$.
 If $E_G(X_i, V(G)\setminus X_i) \cap M_3$ contains exactly one edge $uv$ with $u \in X_i$,
 then the matching $(M \cup \{u_iu\}) \setminus \{uv\}$ is acyclic, 
 which is a contradiction to the minimality of $|M_3|$.
 Therefore, $|E_G(X_i, V(G)\setminus X_i) \cap M_3| \in \{0,2\}$.
 Hence, we obtain that
 \begin{align*}
  \nu_{ac}(G) &= |M_1| + |M_2| + |M_3| \\
	      &\leq n - \frac{|M_3|}{2} + m - \frac{|M_3|}{2} + |M_3| \\
	      &=n+m,
 \end{align*}
 which completes the proof.
\end{proof}
\begin{lemma}
 $\Gamma$ is satisfiable if and only if $\nu_{ac}(G) = \nu_{s}(G)$.
\end{lemma}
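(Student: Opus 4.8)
The plan is to use the inequality chain (\ref{key_ineq}) together with Lemma \ref{l_acy}: since $\nu_s(G) \le \nu_{ac}(G) = n+m$, the claimed equivalence reduces to showing that $\Gamma$ is satisfiable if and only if $G$ has an induced matching of size $n+m$. The dictionary I would set up reads the truth value of $x_i$ off the matching edge chosen inside the triangle $X_i$: the edge $t_iu_i$ stands for $x_i$ \emph{true} (it leaves $f_i$ uncovered, so the positive-literal vertices adjacent to $f_i$ stay free, while covering $t_i$ blocks the negative-literal vertex $w_3$), and $f_iu_i$ stands for $x_i$ \emph{false} (symmetrically freeing $w_3$ and blocking $w_1,w_2$).

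For the forward implication I would take, given a satisfying assignment, the edge $t_iu_i$ or $f_iu_i$ in each $X_i$ according to the value of $x_i$, and for each clause $c_j$ choose a satisfying literal and match $v_j$ to the corresponding literal vertex $w$ of $C_j$. This gives $n+m$ edges. To check that the resulting $M$ is induced I would note that distinct triangle edges, distinct clique edges, and a triangle edge versus a clique edge at an unrelated variable all lie in vertex sets with no joining edge of $G$; the only potential conflict is between $v_jw$ and the triangle edge at the variable of $w$, and the choice of a \emph{satisfying} literal guarantees precisely that the unique crossing neighbour of $w$ (an $f_i$ or $t_i$) is the uncovered triangle vertex, so $G(M)$ is $1$-regular.

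The main work is the reverse implication, and it rests on a rigidity statement: every induced matching $M$ of size $n+m$ uses exactly one internal edge in each triangle and each clique and no crossing edge. I would prove this by showing that each of the $n$ triangles and $m$ cliques is incident to at most one edge of $M$. For a triangle this follows because its three vertices are pairwise adjacent, so any covered vertex forbids a second covered vertex except its own matching partner; if that partner is external (a crossing edge), the other two triangle vertices must stay uncovered. For a clique the same pairwise-adjacency argument forbids any two covered vertices other than a single internal edge, and in particular rules out a covered literal vertex coexisting with any other covered clique vertex. Writing $t(g)\in\{0,1\}$ for the number of $M$-edges meeting a group $g$ (triangle or clique), we get $|M|\le \sum_g t(g)\le n+m$, because every edge meets at least one group; since a crossing edge meets two groups, equality $|M|=n+m$ forces all edges internal and every group met exactly once. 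I expect this counting/rigidity step to be the main obstacle, since it is what excludes exotic matchings mixing internal and crossing edges.

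Finally, from such a rigid matching I would recover the assignment by the same dictionary: set $x_i$ true if its triangle edge covers $t_i$ and false if it covers $f_i$ (and arbitrarily if the edge is $t_if_i$). For each clause the unique internal edge of $C_j$ covers some literal vertex $w$, and because $M$ is induced the crossing neighbour of $w$ is uncovered; this forces the variable of $w$ into exactly the case making that literal true, so $c_j$ is satisfied. Once the rigidity step is in place, both directions of this dictionary are routine verifications.
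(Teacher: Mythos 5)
Your proposal is correct, and while the forward direction coincides with the paper's (same matching, same check that the only possible conflict is between $v_jw$ and the triangle edge of $w$'s variable), your reverse direction takes a genuinely different route. The paper picks a maximum induced matching that maximizes $|\{u_i : i \in [n]\} \cap V(M)|$ and then normalizes it by exchange arguments: a crossing edge $uv$ with $u \in X_i$ is swapped for $u_iu$, and an edge $f_it_i$ is swapped for $u_if_i$, each swap contradicting the extremal choice; this forces $M$ to consist of one edge $u_iy_i$ per triangle and one edge per clique before the assignment is read off. You instead prove the same rigidity by a counting argument: since every gadget (triangle or clique $C_j$) induces a complete graph, an induced matching meets each gadget in at most one edge, and since a crossing edge meets two gadgets while an internal edge meets one, $|M| + |M_3| \le n+m$; equality $|M| = n+m$ then forces $M_3 = \emptyset$ and exactly one internal edge per gadget. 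This is cleaner — it avoids the extremal choice and the case analysis of swaps, it re-derives the upper bound $\nu_s(G) \le n+m$ as a by-product, and it lets you tolerate the edge $t_if_i$ (which the paper's swaps eliminate) by assigning that variable arbitrarily, which is sound because such a variable's literal vertices are all blocked and hence never serve as the covered literal vertex $w$ certifying a clause. The trade-off is generality: your argument leans on the gadgets being cliques, whereas the paper's exchange technique with an extremal/minimality choice is the one that carries over to the bipartite construction of Theorem \ref{t2} (where the gadgets are stars and the clique-counting bound fails), so the paper's proof of Theorem \ref{t1} doubles as a warm-up for the harder Lemma \ref{l_ur}.
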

\begin{proof}
 Let $\Gamma$ be satisfiable, and let $t: \{x_1,\ldots,x_n \} \rightarrow \{0,1 \}$ be some satisfying truth assignment of $\Gamma$.
 Let $M$  arise from 
 $\{u_if_i : i \in \lbrack n \rbrack \text{ and } t(x_i) = 0\} \cup
 \{u_it_i : i \in \lbrack n \rbrack \text{ and } t(x_i) = 1\}$ by adding, 
 for every $j \in \lbrack m \rbrack$, some edge $v_jw$ 
 where the literal that is identified with the vertex $w$ is true under $t$.
 Suppose that $M$ is not an induced matching, that is, there is some edge $e$ between two
 edges $e_1$ and $e_2$ in $M$. 
 By construction, we may assume that 
 $e_1 = u_iy_i$ for $y_i \in \{f_i , t_i \}$
 and some $i \in \lbrack n \rbrack$, 
 and $e_2 = v_jw$ for some $j \in \lbrack m \rbrack$
 where the literal that is identified with the vertex $w$ is true under $t$.
 Thus, $y_i$ and $w$ are adjacent.
 First, we assume that $y_i = f_i$.
 By construction, the literal
 that is identified with the vertex $w$ is $x_i$,
 which is a contradiction to the choice of $M$. 
 Hence, we may assume that $y_i = t_i$.
 By construction, the literal
 that is identified with the vertex $w$ is $\bar{x}_i$,
 which is a contradiction to the choice of $M$. 
 Since $M$ has size $n+m$, 
 Lemma \ref{l_acy} implies that
 $\nu_{ac}(G) = \nu_{s}(G)$.
 
 Let $\nu_{ac}(G) = \nu_{s}(G)$, which, by Lemma \ref{l_acy}, 
 implies that $\nu_s(G) = n+m$. 
 Let $M$ be some maximum induced matching maximizing 
 $|\{u_i : i \in \lbrack n \rbrack \} \cap V(M)|$.
 If there is some edge $uv$ with $u \in X_i$ and $v \in C_j$ 
 for some $i \in \lbrack n \rbrack$ and $j \in \lbrack m \rbrack$,
 then the matching $M = (M \cup \{ u_iu\}) \setminus \{uv\}$ is induced, 
 which is a contradiction to the maximality of $|\{u_i : i \in \lbrack n \rbrack \} \cap V(M)|$.
 Therefore, $E_G \big( \bigcup_{i=1}^n{X_i}, V(G) \setminus \bigcup_{i=1}^n{X_i} \big) = \emptyset$.
 Moreover, if there is some edge $f_i t_i$ in $M$ for some $i \in \lbrack n \rbrack$, then the matching 
 $(M \cup \{u_if_i\}) \setminus \{f_i t_i\}$ is also induced,
 which is a contradiction to the maximality of  $|\{u_i : i \in \lbrack n \rbrack \} \cap V(M)|$. 
 Since $\nu_s(G) = n+m$, this implies that 
 either $u_if_i$ or $u_it_i$ belong to $M$ for every $i \in \lbrack n \rbrack$, and
 $E_G(C_j) \cap M \neq \emptyset$ for every $j \in \lbrack m \rbrack$.
 Let $t: \{x_1,\ldots,x_n \} \rightarrow \{0,1 \}$ be defined
 as $t(x_i) = 0$ if $u_if_i \in M$ and $t(x_i) = 1$ if $u_i t_i \in M$.
 Suppose, for a contradiction, that $\Gamma$ is not satisfied under $t$, 
 that is, there is some clause $c_j$ such that no literal is true under $t$. 
 By construction, there is some vertex $w \in V(M) \cap C_j$ 
 where its corresponding literal $y$ is not true under $t$.
 By construction, $w$ is adjacent to $f_i$ or $t_i$ for some $i \in [n]$.
 If $w$ is adjacent to $f_i$, then $y = x_i$,
 and, since $M$ is induced, the edge $u_it_i$ is in $M$,
 which implies that $t(x_i) = 1$, a contradiction.
 Hence, we may assume that $w$ is adjacent to $t_i$.
 This implies that $y = \bar{x}_i$ and that the edge $u_if_i$ is in $M$,
 that is, $t(\bar{x}_i) = 1$, a contradiction.   
\end{proof}

\section{Proof of Theorem \ref{t2}}
Given a boolean formula in conjunctive normal form,
{\sc Exact Satisfiability}
is the problem is to 
decide whether there is a truth assignment of the variables so that every
clause contains exactly one true literal.
If there is such a truth assignment, then the instance
is \textit{exact satisfiable}.
 \begin{lemma} \label{l_ur_red}
 {\sc Exact Satisfiability}
 remains NP-complete when restricted to instances where 
 the literals occur only positively, 
 every literal occurs at most three times, 
 every clause has size exactly three,
 and no literal appears twice in one clause.
 \end{lemma}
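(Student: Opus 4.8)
The plan is to reduce from the general version of {\sc Exact Satisfiability} (known to be NP-complete, cf. \cite{gajo}) in several successive normalizations, each time transforming an instance into an equisatisfiable one that satisfies one more of the four required restrictions. The target restrictions are: (i) all literals are positive, (ii) each literal occurs at most three times, (iii) each clause has size exactly three, and (iv) no literal appears twice in a single clause. Since exact satisfiability is insensitive to repeated literals within a clause only in a delicate way, I would handle (iv) and (i) first, then bound occurrences, then fix clause sizes.

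First I would eliminate negative literals. For each variable $x$ that occurs negatively, introduce a fresh variable $x'$ meant to represent $\bar{x}$, replace every occurrence of $\bar{x}$ by $x'$, and add a gadget of clauses forcing exactly one of $x, x'$ to be true in any exact-satisfying assignment. The natural gadget is a clause $\{x, x'\}$ together, if needed, with padding to reach the desired size; under exact satisfiability the clause $\{x,x'\}$ already enforces that exactly one of $x,x'$ is true, which is precisely the semantics of $x' = \bar{x}$. This makes the instance purely positive while preserving exact satisfiability. Next, to ensure no literal appears twice in a clause, note that under exact satisfiability a clause containing the literal $x$ twice, say $\{x,x,\ldots\}$, is satisfiable only if at most one of the two copies is true, which is impossible unless we treat them as distinct; the clean fix is to introduce a duplicate variable $x''$ with an equality-enforcing gadget $\{x, x''\}$ (again exactly-one), and replace the second occurrence of $x$ by $x''$.

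The third step bounds the number of occurrences of each literal by three. This is the standard occurrence-reduction trick: if a variable $x$ occurs $k > 3$ times, replace its occurrences by fresh copies $x_1, \ldots, x_k$ and chain them together with equality gadgets so that all copies are forced to take the same value, while arranging the gadgets so that each new variable appears only a bounded number of times. Under exact satisfiability the equality $x_i = x_{i+1}$ can be propagated through size-bounded exactly-one clauses, and with care each copy can be made to appear at most three times; the gadget clauses themselves must also respect the occurrence bound, which forces the equality chain to be built so that every fresh variable is touched only a constant number of times. Finally, I would pad every clause to size exactly three: a clause of size one or two is extended by adding fresh dummy variables whose only role is to absorb the missing slots, together with clauses forcing those dummies to a fixed value that does not disturb the exactly-one condition of the padded clause.

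The main obstacle is that all four transformations must be carried out \emph{simultaneously} consistently: each gadget I introduce to fix one property itself introduces new clauses and new literal occurrences, which must not violate the properties already established. In particular, the occurrence bound of three is the tightest constraint, because the equality-enforcing gadgets and the clause-padding dummies each consume occurrences, so I must verify that after all substitutions every literal still occurs at most three times and every clause still has size exactly three, without reintroducing repeated literals. The bookkeeping — checking that the composition of the four reductions lands in the intersection of all four restricted classes while preserving exact satisfiability in both directions — is the delicate part; each individual gadget is routine, but their interaction requires a careful accounting of occurrence counts.
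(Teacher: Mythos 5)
Your overall strategy---a chain of normalizations starting from general {\sc Exact Satisfiability}---is a genuinely different route from the paper's, which instead starts from a strong recent result of Porschen et al.\ (XSAT remains NP-complete for purely positive instances with every literal occurring exactly three times and every clause of size exactly three), so that only two repairs are needed: a literal repeated inside a clause is \emph{forced to be false} (and the forced values are propagated, possibly deleting variables), and the resulting size-two clauses $x \lor y$ are restored to size three by adding $x \lor y \lor a_1$, $a_1 \lor a_2 \lor a_3$, $a_1 \lor a_2 \lor a_4$, $a_2 \lor a_3 \lor a_4$, whose unique exact solution forces $a_2 = 1$ and $a_1 = a_3 = a_4 = 0$. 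Your route could in principle work, but as written it contains a concrete error in the second step: under exact satisfiability the clause $\{x, x''\}$ enforces that \emph{exactly one} of $x, x''$ is true, i.e.\ $x'' = \bar{x}$. It is a complementation gadget, not an ``equality-enforcing'' one---indeed you used the very same clause correctly as complementation in your first step. Consequently, replacing the second copy of $x$ in a clause $\{x, x, y, \dots\}$ by $x''$ changes the semantics: the original clause forces $x = 0$ and exactly one of the remaining literals true, whereas in your transformed instance the pair $x, x''$ always contributes exactly one true literal, so all the remaining literals are forced false. The two instances are not equisatisfiable. (The fix is either a genuine equality gadget, e.g.\ the two clauses $\{x, a\}$ and $\{x'', a\}$, which force $x = x''$ via $a = \bar{x} = \bar{x}''$, or the paper's observation that a duplicated literal is simply forced false.)

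The remaining steps also stop short of where the actual work lies. Enforcing equality along an occurrence-reduction chain, and forcing padding dummies to a fixed value using only positive exactly-one clauses, both require explicit constructions (the paper's four-clause gadget is precisely such a constant-forcing device), and each gadget consumes occurrences of its auxiliary variables; you acknowledge that this bookkeeping is ``the delicate part'' but defer it entirely, so the proof is not complete as stated. In short: the decomposition into four normalizations is a reasonable alternative to the paper's shortcut through a stronger base problem, but step two is wrong as formulated, and the gadgets on which steps three and four depend are asserted rather than built.
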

 \begin{proof} 
 It was proved recently \cite{possw} that {\sc Exact Satisfiability} 
 remains NP-complete when restricted to instances
 where the literals occur only positively, 
 every literal occurs exactly three times, 
 and every clause has size exactly three.
 Let $\Gamma$ be such an instance of {\sc Exact Satisfiability}.
 
 Suppose that some literal $x$ appears
 twice in some clause $c$ of $\Gamma$. 
 This implies that $x$ must be false and that the other literal in $c$ must be true.
 Therefore, some variables already have a unique truth value,
 which might result in a contradiction in which case $\Gamma$ is not exact satisfiable.
 If not, then we delete all variables with a unique truth value.
 If we apply this process iteratively, then
 we obtain an equivalent instance $\Gamma^\prime$
 of {\sc Exact Satisfiability} where
 the literals occur only positively,
 every literal appears exactly three times,
 every clause has size two or three,
 and no literal appears twice in one clause.

 We construct a new instance that is equivalent to $\Gamma^\prime$ where
 the literals occur only positively,
 every literal appears at most three times,
 every clause has size exactly three,
 and no literal appears twice in one clause.
 Suppose that there is a clause of size two with literals $x$ and $y$ in $\Gamma^\prime$.
 Since no literal appears twice in one clause, $x \neq y$.  
 We delete the clause $x \lor y$ and we add four new clauses
 $x \lor y \lor a_1$,
 $a_1 \lor a_2 \lor a_3$,
 $a_1 \lor a_2 \lor a_4$, and
 $a_2 \lor a_3 \lor a_4$.
It is easy to see that this instance of {\sc Exact Satisfiability} satisfies all desired constraints.
Since the only possible solution of the above four clauses is obtained by
assigning $x$ or $y$ to $1$, $a_2$ to $1$, and $a_1$, $a_3$, and $a_4$ to $0$,
the newly constructed instance of {\sc Exact Satisfiability} is equivalent to $\Gamma^\prime$.
\end{proof}
 Let $\Gamma$ be an instance of {\sc Exact Satisfiability} as in Lemma \ref{l_ur_red}
 with variables $x_1,\ldots,x_n$ and clauses $c_1,\ldots,c_m$.
 Let $G$ be the graph that arises from the union of $n$ disjoint copies of a $K_{1,2}$ with vertex sets $X_1,\ldots,X_n$,
 and $m$ disjoint copies of a $K_{1,3}$ with vertex sets $C_1,\ldots,C_m$.
 For every $i \in [n]$, let $f_i$ and $t_i$ be the leaves of $X_i$, and let $u_i$ be the vertex of degree two in $X_i$. 
 For every $j \in [m]$, identify the three leaves of $C_j$ with the literals that belong to the clause $c_j$,
 and let $v_j$ be the vertex of degree three in $C_j$.
 Let $i$ be in $[n]$, 
 let $W$ be the set of vertices in $\bigcup_{j=1}^m C_j$ that are identified with the literal $x_i$,
 and let $J = \{j \in [m] : W \cap C_j \neq \emptyset \}$.
 Furthermore, let $W^\prime = \big( \bigcup_{j \in J}{C_j\setminus \{v_j\}} \big) \setminus W$.
 Now, add the edges $f_iw$ for every $w$ in $W$,
 and the edges $t_iw^\prime$ for every $w^\prime$ in $W^\prime$, see Figure \ref{fig2} for an illustration.
 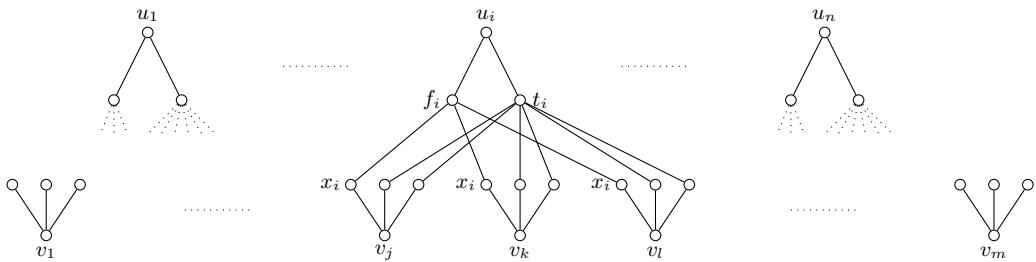
\begin{figure}[H]
\centering\tiny
\begin{tikzpicture}[scale = 0.9] 
	    \foreach \i in {1,2,3} {
	    \node (u\i) at (5*\i,3) {};
	    \node (f\i) at (-0.5+5*\i,2) {};
	    \node (t\i) at (0.5+5*\i,2) {};
	    \draw[-] (u\i) -- (t\i);
	    \draw[-] (f\i) -- (u\i);
	    }
	    \foreach \i in {1,3} {
	    \draw[-,dotted] (f\i) -- (4.3 + 5*\i -5,1.5);
	    \draw[-,dotted] (f\i) -- (4.5 + 5*\i -5,1.5);
	    \draw[-,dotted] (f\i) -- (4.7 + 5*\i -5,1.5);
	    }
	    
	    \foreach \i in {1,2,...,6} {
	    \draw[-,dotted] (t1) -- (5.5-0.7 + 0.2*\i,1.5);
	    \draw[-,dotted] (t3) -- (15.5-0.7 + 0.2*\i,1.5);
	    }
	    \pgftext[x=5cm,y=3.25cm]  {\footnotesize $u_1$};
	    \pgftext[x=10cm,y=3.25cm] {\footnotesize $u_{i}$};
	    \pgftext[x=15cm,y=3.25cm] {\footnotesize $u_{n}$};
	    \pgftext[x=3.5cm,y=-0.25cm] {\footnotesize $v_{1}$};
	    \pgftext[x=8.5cm,y=-0.25cm] {\footnotesize $v_{j}$};
	    \pgftext[x=10.5cm,y=-0.25cm] {\footnotesize $v_{k}$};
	    \pgftext[x=12.5cm,y=-0.25cm] {\footnotesize $v_{l}$};
	    \pgftext[x=17.5cm,y=-0.25cm] {\footnotesize $v_{m}$};
	    \pgftext[x=9.2cm,y=2cm] {\footnotesize $f_{i}$};
	    \pgftext[x=10.8cm,y=2cm] {\footnotesize $t_{i}$};
	    \pgftext[x=7.7cm,y=0.75cm] {\footnotesize $x_i$};
	    \pgftext[x=9.7cm,y=0.75cm] {\footnotesize $x_{i}$};
	    \pgftext[x=11.7cm,y=0.75cm] {\footnotesize $x_{i}$};
	    \draw[-,dotted] (7,2.5) -- (8,2.5);
	    \draw[-,dotted] (12,2.5) -- (13,2.5);
	    
	    \foreach \i in {1,2,3} {
	    \node (a\i) at (6+2*\i,0.75) {};
	    \node (b\i) at (6.5+2*\i,0.75) {};
	    \node (c\i) at (7+2*\i,0.75) {};
	    \node (d\i) at (6.5+2*\i,0) {};
	    \draw[-] (d\i) -- (a\i);
	    \draw[-] (d\i) -- (b\i);
	    \draw[-] (d\i) -- (c\i);
	    }	    
	    \draw[-] (f2) -- (a1);
	    \draw[-] (f2) -- (a2);
	    \draw[-] (f2) -- (a3);
	    \foreach \i in {1,2,3} {
	    \draw[-] (t2) -- (b\i);
	    \draw[-] (t2) -- (c\i);
	    
	    }
	    \draw[-,dotted] (14.5,0.375) -- (15.5,0.375);
	    \draw[-,dotted] (6.5,0.375) -- (5.5,0.375);
	    
	    \foreach \i in {4,5} {
	    \node (a\i) at (-53+14*\i,0.75) {};
	    \node (b\i) at (-52.5 +14*\i,0.75) {};
	    \node (c\i) at (-52 +14*\i,0.75) {};
	    \node (d\i) at (-52.5 +14*\i,0) {};
	    \draw[-] (d\i) -- (a\i);
	    \draw[-] (d\i) -- (b\i);
	    \draw[-] (d\i) -- (c\i);
	    }
\end{tikzpicture}
\caption{The construction for the proof of Theorem \ref{t2}.} \label{fig2}
\end{figure}
Golumbic et al. \cite{gohile} showed that,
if $G$ is a bipartite graph and $M$ is some uniquely restricted matching in $G$,
then $G(M)$ has a vertex of degree $1$ in $G(M)$.
We shall use this within the proof of the following lemma. 
 \begin{lemma} \label{l_ur}
  $\nu_{ur}(G) = n+m$.
 \end{lemma}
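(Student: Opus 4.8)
The plan is to prove the two inequalities $\nu_{ur}(G)\ge n+m$ and $\nu_{ur}(G)\le n+m$ separately, using throughout the characterization of Golumbic et al.\ that a matching is uniquely restricted exactly when it admits no alternating cycle.

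For the lower bound I would exhibit an explicit uniquely restricted matching of size $n+m$. Let $M_0=\{u_it_i:i\in[n]\}\cup\{v_jw_j:j\in[m]\}$, where $w_j$ is an arbitrary leaf of $C_j$. Since $f_i$ is uncovered, the only neighbour of $u_i$ in $G(M_0)$ is $t_i$, and since the two remaining leaves of each $C_j$ are uncovered, the only neighbour of $v_j$ in $G(M_0)$ is $w_j$; thus every edge of $M_0$ has an endpoint of degree $1$ in $G(M_0)$, so no edge of $M_0$ lies on a cycle of $G(M_0)$. As every $M_0$-alternating cycle is a cycle of $G(M_0)$ through edges of $M_0$, there is no such cycle, $M_0$ is uniquely restricted, and $\nu_{ur}(G)\ge|M_0|=n+m$.

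The substantive part is the upper bound. I would fix a maximum uniquely restricted matching $M$ and split it as $M=M_1\cup M_2\cup M_3$ with $M_1\subseteq\bigcup_i E_G(X_i)$, $M_2\subseteq\bigcup_j E_G(C_j)$, and $M_3$ the set of remaining (cross) edges, and, as in the proof of Lemma~\ref{l_acy}, choose $M$ so as to minimize $|M_3|$. The engine of the argument is a family of short alternating cycles created by the construction: if the clause $c_j$ involves the variables $a,b,c$, then the three leaves $\ell_a,\ell_b,\ell_c$ of $C_j$ are all joined to $v_j$, and each pair shares a common neighbour among $t_a,t_b,t_c$ (the leaves $\ell_a,\ell_b$ share $t_c$, and so on). Consequently, whenever $v_j\ell_x\in M$ the matching cannot also contain the edge joining a second leaf of $C_j$ to the $t$-vertex of the third variable, for otherwise the corresponding $4$-cycle would be alternating. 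I expect these forbidden configurations, together with rotations of a single cross edge onto a free centre $v_j$ or $u_i$ (each of which would contradict the minimality of $|M_3|$ unless it produced an alternating cycle), to pin down precisely how cross edges may attach to each clause and each variable gadget.

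The bipartite degree-$1$ property quoted just before the statement is what I would use to finish: $G(M)$ has a covered vertex of degree $1$, and peeling it together with its partner leaves a smaller uniquely restricted matching, so the local constraints above can be propagated until the count closes. The main obstacle — and the essential difference from the proof of Theorem~\ref{t1} — is the bookkeeping for $M_3$: unlike the clique gadget, a gadget $X_i$ may simultaneously carry an internal edge $u_if_i$ and a cross edge at $t_i$, so a purely gadget-local count of the form used in Lemma~\ref{l_acy} fails. One must instead combine the alternating-cycle obstructions with the degree-$1$ lemma to show that every cross edge is compensated by an unused centre, yielding $|M_1|+|M_2|+|M_3|\le n+m$.
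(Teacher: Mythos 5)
Your lower bound is exactly right and coincides with the paper's: the matching $\{u_it_i : i \in [n]\}$ together with one pendant edge at each $v_j$ consists of pendant edges of $G(M_0)$, hence is uniquely restricted. The upper bound, however, is an outline whose two deferred steps are precisely where all the difficulty sits, so as written there is a genuine gap. The structural fact you actually need — for $M$ chosen to minimize $|M_3|$, any $X_i$ met by a cross edge carries no edge of $M_1$, and any $C_j$ met by a cross edge carries no edge of $M_2$ — is never proved. Your $4$-cycle obstructions (if $v_j\ell_a \in M$ and $t_c\ell_b \in M$, then $v_j\ell_at_c\ell_bv_j$ is $M$-alternating) are correct, but they concern pairs of clause-side edges; they say nothing about the configuration you yourself single out as the main obstacle, namely an internal edge $u_if_i$ together with a cross edge at $t_i$. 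That configuration is locally uniquely restricted (it induces a path), so it cannot be excluded by any local forbidden-subgraph observation: the paper rules it out only by combining the minimality of $|M_3|$ with a case analysis of the alternating cycles created by rotating an edge onto the free center (its Claims \ref{c1} and \ref{c2}, each with several cases). The sentence ``I expect these forbidden configurations \ldots to pin down precisely how cross edges may attach'' is exactly the statement that has to be proven, not assumed.

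The second gap is that you aim the bipartite degree-$1$ lemma at the wrong graph. A degree-$1$ vertex of $G(M)$ may simply be some $u_i$ on a pendant edge $u_it_i$; peeling it and its partner yields no information about $M_3$, and after peeling, the graph is no longer of the constructed form, so there is no induction hypothesis and no specified invariant behind ``propagated until the count closes.'' The effective move, which is the paper's, is to apply the lemma to $H = G(M_3)$: since $M_3$ is itself uniquely restricted and $H$ is bipartite, some cross edge $uv \in M_3$ has an endpoint $u$ of degree $1$ in $H$. The two claims above then guarantee that the relevant center ($u_i$ if $u \in X_i$, or $v_j$ if $u \in C_j$) is uncovered and that every edge of $M_1 \cup M_2$ is pendant in $G(M)$, so swapping $uv$ for $u_iu$ (respectively $v_ju$) produces a matching of the same size that is still uniquely restricted but has fewer cross edges — contradicting the minimality of $|M_3|$, since the assumption $\nu_{ur}(G) > n+m$ forces $M_3 \neq \emptyset$. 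Note also that the paper never establishes a counting inequality of the form $|M_1|+|M_2|+|M_3| \le n+m$ (which, as you observe, fails gadget-locally here); it closes the argument purely by this contradiction. Without these two ingredients, your outline does not yield the bound.
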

 \begin{proof}
  The matching $M$ that arises from $\{u_it_i : i \in [n] \}$ by adding,
  for each $j \in [m]$, an edge between $v_j$ and one of its neighbors is
  uniquely restricted, because all edges in $M$ are pendant edges in $G(M)$.
  Suppose, for a contradiction, that $\nu_{ur}(G) > n+m$,
  and let $M = M_1 \cup M_2 \cup M_3$ be some maximum uniquely restricted matching with 
  \begin{itemize}
   \item $M_1 \subseteq \bigcup_{i=1}^n E_G(X_i)$,
   \item $M_2 \subseteq \bigcup_{j=1}^m E_G(C_j)$, and
   \item $M_3 \subseteq E_G \big( \bigcup_{i=1}^n X_i, V(G) \setminus \bigcup_{i=1}^n X_i \big)$
  \end{itemize}
   minimizing $|M_3|$.
  Since $|M| > n+m$, it follows that $M_3 \neq \emptyset$. 
\begin{claim}\label{c1}
 If $i \in [n]$ is such that
   $M_3 \cap E_G(X_i, V(G) \setminus X_i) \neq \emptyset$,
   then $M_1 \cap E_G(X_i) = \emptyset$.
\end{claim}
\begin{proof}
 Suppose, for a contradiction, that $M_1 \cap E_G(X_i) \neq \emptyset$.
 Let $j \in [m]$ be such that
 $M_3 \cap E_G(X_i,C_j) \neq \emptyset$,
 and let $w_1, w_2$, and $w_3$ be the vertices in $C_j$ distinct from $v_j$
 so that $w_1$ is adjacent to $f_i$, and $w_2$ and $w_3$ are both adjacent to $t_i$.
 
 First, we assume that $f_iw_1$ and $u_it_i$ belong to $M$.
 Since $v_j$ is not covered by $M$,
 the set $M^\prime = \left(M \cup \{v_jw_1\} \right) \setminus \{f_iw_1\}$
 is a matching in $G$, which, by the minimality of $|M_3|$, is not uniquely restricted.
 Since $f_i$ is not covered by $M^\prime$,
 there is an $M^\prime$-alternating cycle in $G$ disjoint from $X_i$, which, by symmetry, can be written
 as $v_jw_1Pw_2v_j$ for some $M^\prime$-alternating path $P$ in $G$.
 Since $P$ is also $M$-alternating, the cycle
 $w_1Pw_2t_iu_if_iw_1$ is $M$-alternating in $G$, which is a contradiction.

 Hence, by symmetry, we may assume that $t_iw_2$ and $u_if_i$ belong to $M$.
 Since $v_j$ is not covered by $M$,
 the set $M^\prime = \left(M \cup \{v_jw_2\} \right) \setminus \{t_iw_2\}$
 is a matching in $G$, which, by the minimality of $|M_3|$, is not uniquely restricted.
 Therefore, there is an $M^\prime$-alternating cycle $C$ that contains the edge $v_jw_2$.
 Since $t_i$ is not covered by $M^\prime$, we have that $X_i \cap V(C) = \emptyset$.
 If $C$ can be written as $w_2v_jw_3Pw_2$ for some $M^\prime$-alternating path $P$, 
 then the cycle $w_2t_iw_3Pw_2$ is $M$-alternating, which is a contradiction.
 Hence, we may assume that $C$ can be written as $w_2v_jw_1Pw_2$
 for some $M^\prime$-alternating path $P$,
 which implies that the cycle
 $w_2t_iu_if_iw_1Pw_2$ is $M$-alternating in $G$, which is a contradiction.
\end{proof}
\begin{claim} \label{c2}
 If $j \in [m]$ is such that
 $M_3 \cap E_G(C_j, V(G) \setminus C_j) \neq \emptyset$,
 then $M_2 \cap E_G(C_j) = \emptyset$.
\end{claim}
\begin{proof}
 Suppose, for a contradiction, that $M_2 \cap E_G(C_j) \neq \emptyset$.
 Let $i \in [n]$ be such that
 $M_3 \cap E_G(X_i,C_j) \neq \emptyset$, and
 let $w_1, w_2$, and $w_3$ be the vertices in $C_j$ distinct from $v_j$ such that
 $w_1$ is adjacent to $f_i$, and $w_2$ and $w_3$ are both adjacent to $t_i$.
 
 First, we assume that $f_iw_1$ and $v_jw_3$ belong to $M$.
 Since $u_i$ is not covered by $M$,
 the set $M^\prime = \left(M \cup \{u_if_i\} \right) \setminus \{f_iw_1 \}$
 is a matching, which, as before, implies that there is an
 $M^\prime$-alternating cycle $C$ that contains the edge $u_if_i$.
 If $v_jw_3$ is not contained in $E(C)$, then the vertices
 $w_1$, $v_j$, and $w_3$ are not contained in $V(C)$,
 which implies that $C$ can be written as $f_iu_it_iPf_i$
 for some $M^\prime$-alternating path $P$ in $G$.
 Since $P$ is also $M$-alternating,
 the cycle $f_iw_1v_jw_3t_iPf_i$ is $M$-alternating, which is a contradiction.
 Hence, we may assume that $v_jw_3$ is contained in $E(C)$.
 In this case, the cycle can either be written as $f_i P w_2v_jw_3P^\prime t_i u_i f_i$
 or as $f_i Q w_3 v_j w_2 Q^\prime t_iu_if_i$.
 In the first case, the cycle $f_iw_1v_jw_3P^\prime t_i w_2 P f_i$ is
 $M$-alternating in $G$, while in the second case
 the cycle $w_2 Q^\prime t_i w_2$ is $M$-alternating,
 which, in both cases, is a contradiction.
 
 Hence, by symmetry, we may assume that $t_iw_2$ and $v_jw_1$ belong to $M$.
 Since $u_i$ is not covered by $M$, the set 
 $M^\prime = \left(M \cup \{u_it_i, v_jw_2 \} \right) \setminus \{t_iw_2, v_jw_1\}$
 is a matching, which, as before, implies that there is an
 $M^\prime$-alternating cycle $C$.
 If $C$ contains exactly one of the edges $u_it_i$ or $v_jw_2$,
 then $C$ can be written as $f_iPt_i u_i f_i$ or
 as $v_jw_3Qw_2v_j$. In the first case,
 the cycle $f_iPt_i w_2v_jw_1f_i$ is $M$-alternating,
 while in the second case
 the cycle $w_3Qw_2t_iw_3$ is $M$-alternating,
 which, in both cases, is a contradiction.
 Hence, we may assume that both $u_it_i$ and $v_jw_2$
 are contained in $E(C)$.
 In this case, the cycle can either be written as $f_i P w_2 v_j w_3 P^\prime t_i u_i f_i$
 or as $f_i Q w_3 v_j w_2 t_i u_i f_i$.
 In the first case, the cycle $w_1 f_i P w_2 t_i P^\prime w_3 v_j w_1$ is
 $M$-alternating in $G$, while in the second case
 the cycle $f_iQw_3v_jw_1f_i$ is $M$-alternating in $G$,
 which, in both cases, is a contradiction.
\end{proof}
By Claim \ref{c1} and \ref{c2}, all edges in $M_1 \cup M_2$ are pendant edges in $G(M)$.
Let $H = G(M_3)$. 
Since $H$ is bipartite, there is a vertex $u$ in $H$ of degree $1$.
Let $uv \in M_3$.
First, we assume that $u \in X_i$ for some $i \in [n]$.
By Claim \ref{c1}, the vertex $u_i$ is not covered by $M$.
Hence, the set $M^\prime = (M \cup \{u_iu\}) \setminus \{uv\}$ is a matching.
Since $u$ is only adjacent to vertices
in $G(M)$ that are covered by edges in $M_2$,
the matching $M^\prime$ is uniquely restricted,
which is a contradiction to the minimality of $|M_3|$.
Hence, we may assume that $u \in C_j$ for some $j \in [m]$.
By Claim \ref{c2}, the vertex $v_j$ is not covered by $M$.
Hence, the set $M^{\prime} = (M \cup \{v_ju\}) \setminus \{uv\}$
is a matching.
Since $u$ is only adjacent to vertices
in $G(M)$ that are covered by edges in $M_1$,
the matching $M^\prime$ is uniquely restricted,
which is a contradiction to the minimality of $|M_3|$.
\end{proof}
\begin{lemma}
 $\Gamma$ is exact satisfiable if and only if $\nu_{ur}(G) = \nu_s(G)$.
\end{lemma}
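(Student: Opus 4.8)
The plan is to use Lemma~\ref{l_ur}, which gives $\nu_{ur}(G)=n+m$, together with the inequality $\nu_s(G)\le\nu_{ur}(G)$ from~(\ref{key_ineq}); hence $\nu_s(G)\le n+m$ always, and $\nu_{ur}(G)=\nu_s(G)$ is equivalent to the existence of an induced matching of size $n+m$ in $G$. I would prove both implications against this reformulation.

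For the forward direction, suppose $t$ is an exact satisfying assignment. I build a matching $M$ of size $n+m$ as follows: for every $i\in[n]$ put $u_it_i\in M$ if $t(x_i)=1$ and $u_if_i\in M$ if $t(x_i)=0$, and for every $j\in[m]$ put $v_jz_j\in M$, where $z_j$ is the unique leaf of $C_j$ identified with the true literal of $c_j$. I would check that $M$ is induced by inspecting the two kinds of covered leaves. A covered leaf $z_j$ is an occurrence of a true variable $x_a$, whose only neighbours outside $C_j$ are $f_a$ and the vertices $t_b$ for the other variables $x_b$ of $c_j$; since $t(x_a)=1$ we have $u_at_a\in M$, so $f_a$ is uncovered, and since exactly one literal of $c_j$ is true, each such $x_b$ is false, so $u_bf_b\in M$ and every $t_b$ is uncovered. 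Symmetrically, a covered leaf $t_i$ (for a true $x_i$) is adjacent only to vertices of $W'$, which are literals that are not selected in their clauses and hence uncovered, and a covered leaf $f_i$ (for a false $x_i$) is adjacent only to occurrences of $x_i$, none of which is selected. Thus no two edges of $M$ are joined by an edge, so $\nu_s(G)\ge n+m$ and therefore $\nu_s(G)=\nu_{ur}(G)$. Here the \emph{exactness} of $t$ is precisely what forbids a second covered neighbour $t_b$ of $z_j$.

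For the backward direction, assume there is an induced matching of size $n+m$, and choose such an $M$ minimising the number of cross edges, i.e.\ edges in $E_G\big(\bigcup_{i}X_i,\,V(G)\setminus\bigcup_i X_i\big)$; equivalently, $M$ maximises the number of covered centres $u_i$ and $v_j$, since an internal edge covers exactly one centre and a cross edge covers none. I claim $M$ has no cross edge. The key structural fact is that, $M$ being induced, $G(M)$ is $1$-regular, so every covered vertex has a unique covered neighbour, namely its partner. From this I would run an exchange argument: a cross edge $f_iw$ with $w\in C_j$ (the case $t_iw$ is symmetric) forces $u_i$, $v_j$, all other occurrences of $x_i$, and the vertices $t_a,t_b$ for the other two variables of $c_j$ to be uncovered. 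If the other leaf of $X_i$ is uncovered, I can replace $f_iw$ by $u_if_i$ and strictly increase the number of covered centres, a contradiction; hence both leaves of every cross-active $X_i$ are cross-matched, and, dually (replacing $f_iw$ by $v_jw$), every cross-active $C_j$ has at least two cross-matched leaves.

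The hard part will be turning these local obstructions into a global contradiction. Tracing the $1$-regularity constraints, a clause-gadget $C_j$ that receives both cross edges of a single double-cross $X_i$ forces one of its remaining variable-gadgets $X_a$ to have \emph{both} leaves uncovered, so $X_a$ contributes no edge at all; more generally, after the reduction the cross edges form a union of alternating variable–clause cycles (each cross-active $X_i$ having degree two and, one checks, each cross-active $C_j$ having exactly two cross-matched leaves), along which the uncoverings cascade. Since a fully uncovered variable gadget costs a covered centre that no cross edge can pay for, counting the covered centres against $|M|=n+m$ yields a contradiction whenever a cross edge is present, so $M$ is internal. Then size $n+m$ forces exactly one edge $u_iy_i$ in each $X_i$ and one edge $v_jz_j$ in each $C_j$, and I set $t(x_i)=1$ iff $u_it_i\in M$. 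Finally I read off exact satisfiability: for each $j$ the leaf $z_j$ is an occurrence of some $x_a$, and as $z_j$ is adjacent to $f_a$ while $G(M)$ is $1$-regular, $f_a$ is uncovered, whence $u_at_a\in M$ and $x_a$ is true; and if some other variable $x_b$ of $c_j$ were true then $u_bt_b\in M$ would make $t_b$ a second covered neighbour of $z_j$ (since $z_j\in W'$ for $x_b$), contradicting $1$-regularity. Hence every clause contains exactly one true literal and $\Gamma$ is exact satisfiable.
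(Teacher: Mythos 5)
Your reduction of the statement to ``does $G$ admit an induced matching of size $n+m$'' via Lemma~\ref{l_ur} and (\ref{key_ineq}) is exactly right, your forward direction is correct and essentially identical to the paper's, and in the backward direction your extremal choice (maximizing covered centres is the same as minimizing $|M_3|$ for a maximum induced matching) and your first exchange, eliminating any $X_i$ incident with exactly one cross edge, also match the paper.

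The gap is in the remaining case, when both leaves of some $X_i$ are cross-matched, say $t_iv,f_iw\in M$. Here you abandon local exchanges and appeal to a global picture (``the cross edges form a union of alternating variable--clause cycles\ldots the uncoverings cascade\ldots counting the covered centres\ldots yields a contradiction''), but this core contradiction is asserted, not derived, and the count does not close as sketched. With $a$ cross-active variable gadgets, $b$ cross-active clause gadgets and $|U|$ gadgets containing no matching edge at all, one only gets $|M|\le n+m+(a-b)-|U|$, since each cross-active $X_i$ contributes \emph{two} cross edges against one lost centre; so you need both $a=b$ (your parenthetical ``one checks, each cross-active $C_j$ has exactly two cross-matched leaves'') and $|U|\ge 1$, and neither is proved. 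Worse, your cascade claim producing a fully uncovered variable gadget is only argued when both cross edges of a single $X_i$ enter the same $C_j$; for the longer cycles your description explicitly allows, it does not apply, so you would in addition have to rule those out (true, but it needs an argument: if a leaf of $C_j$ is matched to $f_i$, then every other leaf of $C_j$ lies in $W^\prime$ for $x_i$ and is adjacent to the covered vertex $t_i$, hence uncovered, which forces the two cross edges at $C_j$ to come from the single gadget $X_i$). All of this can be completed, but as written the hard half of the backward direction is missing. The paper avoids the entire global analysis with one more local exchange: if $t_iv,f_iw\in M$ with $v\in C_j$, then induced-ness of $M$ forces $v_j$, the third leaf of $C_j$, the other occurrences of $x_i$, and all neighbours of $v$ outside $C_j$ other than $t_i$ to be uncovered, so $(M\cup\{u_if_i,v_jv\})\setminus\{t_iv,f_iw\}$ is again an induced matching of the same size with two fewer cross edges --- an immediate contradiction to your extremal choice, in both the same-clause and the different-clause sub-cases.
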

\begin{proof}
 Let $\Gamma$ be exact satisfiable, and let $t: \{x_1,\ldots,x_n \} \rightarrow \{0,1 \}$ be some satisfying truth assignment of $\Gamma$
 such that each clause contains exactly one literal that is true under $t$.
 Let $M$  arise from 
 $\{ u_if_i : i \in \lbrack n \rbrack \text{ and } t(x_i) = 0\} \cup
 \{ u_i t_i : i \in \lbrack n \rbrack \text{ and } t(x_i) = 1\}$ by adding, 
 for each $j \in \lbrack m \rbrack$, the edge $v_jv$ 
 where $v$ is identified with a literal in $c_j$ that is true under $t$.
 Suppose, for a contradiction, 
 that $M$ is not an induced matching, that is, there is some edge $e$ between two
 edges $e_1$ and $e_2$ in $M$.
 By construction, we may assume that $e_1 = u_iy_i$ for $y_i \in \{f_i , t_i\}$
 and $i \in \lbrack n \rbrack$, and $e_2 = v_jz_j$, where $z_j$ is the vertex in $C_j$
 that is identified with the unique literal in $c_j$ that is true under $t$,
 for some $j \in \lbrack m \rbrack$.
 Therefore, $y_i$ and $z_j$ are adjacent.
 If $y_i = f_i$, then, by construction, 
 $z_j$ is identified with the literal $x_i$,
 which is a contradiction to the choice of $M$.
 Hence, we may assume that $y_i = t_i$,
 which implies that $t(x_i) = 1$.
 Furthermore, by construction, 
 $z_j$ is identified with a literal from the clause $c_j$
 distinct from $x_i$, which, by the choice of $M$,
 is also true under $t$, a contradiction.
 Hence, $M$ is an induced matching of size $n+m$,
 which, by Lemma \ref{l_ur}, implies that $\nu_{ur}(G) = \nu_{s}(G)$.
 
 Let $\nu_{ur}(G) = \nu_s(G)$, which, by Lemma \ref{l_ur}, implies that $\nu_s(G) = n+m$. 
 Let $M = M_1 \cup M_2 \cup M_3$ be some maximum induced matching in $G$ with 
 \begin{itemize}
  \item $M_1 \subseteq \bigcup_{i=1}^n E_G(X_i)$,
  \item $M_2 \subseteq \bigcup_{j=1}^m E_G(C_j)$, and
  \item $M_3 \subseteq E_G \big( \bigcup_{i=1}^n {X_i}, V(G) \setminus \bigcup_{i=1}^n {X_i} \big)$
 \end{itemize}
 minimizing $|M_3|$.
 Suppose, for a contradiction, that $M_3$ is non-empty.
 Let $i \in [n]$ be such that $E_G(X_i, V(G) \setminus X_i) \cap M_3$ is non-empty.
 If $E_G(X_i, V(G) \setminus X_i) \cap M_3 = \{uv\}$ where $u \in X_i$,
 then the matching $(M \cup \{u_iu\}) \setminus \{uv\}$
 is induced, which is a contradiction to the minimality of $|M_3|$.
 Hence, we may assume that $E_G(X_i, V(G) \setminus X_i) \cap M_3 = \{t_iv, f_iw\}$.
 First, we assume that $v,w \in C_j$ for some $j \in [m]$.
 This implies that $v$ and $w$ are the only vertices in $C_j$ that are covered by $M$.
 Hence, the matching $(M \cup \{u_if_i, v_jv\}) \setminus \{t_iv, f_iw\}$
 is induced, which is a contradiction to the minimality of $|M_3|$.
 Hence, we may assume that $v$ belongs to $C_j$ for some $j \in [m]$,
 and that $w$ belongs to $C_{j^\prime}$ for some $j^\prime \in [m] \setminus \{j\}$.
 Again by construction, $v$ is the only vertex in $C_j$ that is covered by $M$.
 Hence, the matching $(M \cup \{u_if_i, v_jv\}) \setminus \{t_iv, f_iw\}$
 is induced, which is a contradiction to the minimality of $|M_3|$.
 Hence, we may assume that $M_3 = \emptyset$.
 Since $|M| = n+m$, this implies
 that $M \cap E_G(X_i)$ and $M \cap E_G(C_j)$ are all non-empty 
 for every $i \in [n]$ and $j \in [m]$.
 Let $t: \{x_1,\ldots,x_n \} \rightarrow \{0,1 \}$ be defined
 as $t(x_i) = 0$ if $u_if_i \in M$ and $t(x_i) = 1$ if $u_it_i \in M$, and 
 suppose, for a contradiction, that $\Gamma$ is not exact satisfied under $t$, 
 that is, there is some clause $c_{\ell} = x_i \lor x_j \lor x_k$ 
 such that not exactly one literal is true under $t$. 
 First, we assume that no literal in $c_{\ell}$ is true under $t$.
 This implies that $u_if_i, u_jf_j$, and $u_kf_k$ belong to $M$,
 which implies that no vertex in $C_{\ell}$ is covered by $M$, a contradiction.
 Hence, we may assume that at least two literals in $c_{\ell}$ are true under $t$,
 which, by symmetry, implies that $u_it_i$ and $u_jt_j$ both belong to $M$.
 Again by construction, this implies that no vertex in $C_{\ell}$ is covered by $M$, a contradiction.
\end{proof}
The graphs constructed in the proof of Theorem \ref{t2} have maximum degree at most $7$.
Replacing $X_1,\ldots,X_n$ by 
$K_{3,3}$'s where the edges of some maximum matching are subdivided once, 
yields the hardness for graphs of maximum degree $5$.
The proof of it proceeds along the lines of the proof of Theorem \ref{t2}.
However, the lemma corresponding to Lemma \ref{l_ur} becomes quite technical,
and so the proof is omitted.
Therefore, in view of \cite{fura2},
for restrictions imposed on the maximum degree, 
the only case left are the graphs with maximum degree $4$.
\section*{Acknowledgement}
I would like to thank Dieter Rautenbach for valuable suggestions 
that helped to improve the presentation of this note.

\end{document}